\newtheorem*{Theo}{Theorem}
\newtheorem*{Lem}{Lemma}
\newcommand{\N}{\mathbb{N}}
\newcommand{\Q}{\mathbb{Q}}
\begin{document}
\title{The irrationality of a number theoretical series}
\author{J.-C. Schlage-Puchta}
\begin{abstract}
Denote by $\sigma_k(n)$ the sum of the $k$-th powers of the divisors
of $n$, and let $S_k=\sum_{n\geq 1}\frac{\sigma_k(n)}{n!}$. We prove
that Schinzel's conjecture H implies that $S_k$ is irrational, and
give an unconditional proof for the case $k=3$.
\end{abstract}
\maketitle

MSC-Index: 11A25, 11N36, 11J72

Let $\sigma_k(n)=\sum_{d|n} d^k$, and set $S_k=\sum_{n\geq 1}
\frac{\sigma_k(n)}{n!}$. For $k=0, 1$ it follows from a
general result by Erd{\H o}s and Straus \cite{ES}, that $S_k$ is
irrational, whereas for $k=2$ the same was shown by Erd{\H o}s and
Kac\cite{EK}. In \cite{Erdos}, Erd{\H o}s posed the question whether
$S_k$ is irrational for all $k$. We will prove the following theorem. 

\begin{Theo}
Define $S_k$ as above.
\begin{enumerate}
\item If Schinzel's conjecture H is true, then $S_k$ is
irrational for all $k\in\N$.
\item $S_3$ is irrational.
\end{enumerate}
\end{Theo}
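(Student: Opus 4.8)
The plan is to run Erd\H os's method: assume $S_k=a/b\in\Q$ and force an arithmetic contradiction out of the tails of the series, the ingredient that pushes the argument past the cases $k\le 2$ being a judicious choice of truncation point, supplied by Schinzel's hypothesis~H.

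\emph{Setup.} After reducing the fraction we may assume $\gcd(a,b)=1$; then $b\mid N!$ for $N\ge b$, so $N!\,S_k\in\mathbb{Z}$ and hence
\[
T_N:=N!\sum_{n>N}\frac{\sigma_k(n)}{n!}=\sum_{j\ge 1}\frac{\sigma_k(N+j)}{(N+1)(N+2)\cdots(N+j)}
\]
is a positive integer for every $N\ge b$. The argument uses the recursion $(N+1)T_N=\sigma_k(N+1)+T_{N+1}$ together with the extremely fast convergence of the defining series: since $\sigma_k(m)\le m^{k+1}$, keeping only a bounded number of initial terms incurs an error $O_{k,r}(N^{-r})$ for any fixed~$r$. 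Note that for $k\ge 2$ the integer $T_N$ itself grows at least like $N^{k-1}$, so its integrality cannot be contradicted by a size estimate; the argument is instead $p$-adic.

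\emph{The main step.} Take $N=p-1$ with $p$ a large prime. Then $pT_{p-1}=\sigma_k(p)+T_p=(1+p^k)+T_p$, and since $T_p$ is an integer, integrality of $T_{p-1}$ forces $T_p\equiv-1\pmod p$. We contradict this by computing $T_p$ modulo $p$ directly from the series. Unrolling the recursion a bounded number of times — and invoking Wilson's theorem to dispose of the factorials that appear — reduces the residue $T_p\bmod p$, and (to the required precision) the correction terms, to the values $\sigma_k(m)$ for $m$ in short initial stretches of the intervals $(p,2p),(2p,3p),\dots$. One chooses $p$ so that each of these finitely many $m=p+i,2p+i,\dots$ factors completely explicitly, as a fixed small integer times one large prime; then every relevant $\sigma_k(m)$ is computable, the whole congruence collapses modulo $p$ to a fixed rational number depending only on $k$ and on the residue class in which $p$ was chosen, and one checks outright that this number is not $\equiv-1$. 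Because the portion of the series not explicitly controlled contributes only $O(N^{-r})$, we conclude $T_p\not\equiv-1\pmod p$, hence $T_{p-1}\notin\mathbb{Z}$ — a contradiction.

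\emph{Applying hypothesis~H, and the main obstacle.} Fixing a residue class $p\equiv c\pmod L$, the conditions ``$p$ prime'' and ``$(p+i)/c_i$ prime'' for the relevant $i$ become the simultaneous primality of a fixed finite family of linear forms in $m=(p-c)/L$; a routine local computation shows this family is admissible, so Schinzel's hypothesis~H gives infinitely many admissible $p$ and proves~(1). The delicate part — the step I expect to be hardest — is the $p$-adic bookkeeping of the previous paragraph: one must (i) bound, in terms of $k$, how far into the intervals $(p,2p),(2p,3p),\dots$ the computation of $T_p\bmod p$ actually reaches; (ii) show that the contributions one has not prearranged cannot conspire to restore $T_p\equiv-1$; and (iii) keep the denominators created by the auxiliary large primes under control. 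For $k=3$ the family of linear forms required is small, and the simultaneous (almost-)primality that the argument calls for can be obtained unconditionally by sieve methods — in the circle of Chen's theorem and the Bombieri--Vinogradov theorem — rather than from hypothesis~H, which yields the unconditional statement~(2).
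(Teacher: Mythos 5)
Your setup is the paper's: assuming $S_k=a/b$, the quantity $T_N=N!\sum_{n>N}\sigma_k(n)/n!$ is a positive integer for $N\ge b$, and hypothesis H is used to choose $N$ so that $N+1,\dots,N+k$ each factor as a fixed small integer times a prime. But the mechanism by which you extract a contradiction --- computing $T_p\bmod p$ ``directly from the series'' and showing it is not $\equiv -1$ --- does not work, and the obstruction is exactly the archimedean/$p$-adic mismatch you try to wave away. Write $T_p=A_M+R_M$, where $A_M$ is the sum of the first $M$ terms. The bound $R_M=O(p^{-r})$ is an archimedean bound on a \emph{rational} number; it gives no information about $R_M$ as a residue. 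Concretely, $R_M\cdot(p+1)\cdots(p+M)$ is an integer of size roughly $p^{k-1}$ about which nothing is known modulo $p$, so the residue of $T_p$ is not determined by any finite initial segment of the series: the series converges in $\mathbb{R}$, not in $\mathbb{Z}_p$, and ``the portion not explicitly controlled contributes only $O(N^{-r})$'' is irrelevant to a congruence. The usable consequence of integrality is archimedean: the sum of the first $k$ terms has fractional part within $N^{-1+\epsilon}$ of $0$, i.e.\ $\bigl\|\sum_{\nu=N}^{N+k-1}\sigma_k(\nu)/((\nu)(\nu-1)\cdots N)\bigr\|<N^{-1+\epsilon}$. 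Even granting that, your ``one checks outright'' step is missing: the explicit short sum really is close to an integer, so no direct evaluation refutes anything. The paper gets the contradiction by comparing \emph{two} admissible choices $q_1=pr$ and $q_2$ prime in the same residue class modulo $k!^k$, so that all terms cancel modulo $1$ except a single rational $q_1^{k-1}/p^k$, which is nonzero modulo $1$ (as $p^2\nmid q_1$) yet has denominator bounded by the fixed $p^k$ while the upper bound $q_1^{-1+\epsilon}$ tends to $0$. Some such comparison device is essential and is absent from your argument.

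For part (2) the gap is larger still. Sieve methods do not give the simultaneous primality of $q$, $(q+1)/2$ and $(q+2)/3$ --- that is precisely a Sophie-Germain-type statement that is open, which is why the paper cannot simply run the conditional argument. What Halberstam--Richert yields is only that $(q+1)/2$ and $(q+2)/3$ have no prime factor below $q^{1/9}$ for $\gg x/\log^3x$ primes $q\le x$. One must then rule out that the resulting almost-primes, with their at most $9$ unknown prime factors, satisfy the near-integrality condition; the paper does this by a case analysis on the number of prime factors of $n=(q+1)/2$, reducing to a Diophantine inequality in the smallest prime factor $p_1$ and bounding the number of solutions via the Erd\H{o}s--Tur\'an discrepancy inequality and van der Corput exponential-sum estimates. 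That analysis is the bulk of the unconditional proof and cannot be replaced by an appeal to ``the circle of Chen's theorem and Bombieri--Vinogradov.''
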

Here, Schinzel's conjecture H is the following generalization of the
prime twin conjecture (cf. \cite{SSH}):\\
{\em Let $P_1, \ldots, P_k$ be integral polynomials with positive
leading coeficients, such that for each prime number $p$ there exists
some integer $a$ such that $P_1(a)\cdots P_k(a)\not\equiv 0\pmod {p}$.
Then there exist infinitely many integers $n$ such that $P_i(n)$ is prime for $1\leq
i\leq k$.}
\begin{proof}
Assume that $S_k$ was rational, say, $S_k=\frac{a}{b}$, $(a,
b)=1$. Then for every $n>b$, $(n-1)!S_k$ is an integer, and we deduce that
\[
\sum_{\nu\geq n}\frac{\sigma_k(\nu)}{(\nu)_{\nu-n+1}} \in\N,
\]
where $(x)_m=x(x-1)\cdots(x-m+1)$. Noting that for all $\varepsilon>0$
and $n$ sufficiently large, we have
$\sigma_k(n)<n^{k+\epsilon}$, we deduce that
\[
\left\|\sum_{\nu=n}^{n+k-1}\frac{\sigma_k(\nu)}{(\nu)_{\nu-n+1}}\right\|
< n^{-1+\epsilon}.
\]
Here and in the sequel, $\|x\|$ denotes the distance of $x$ to the
nearest integer. 
Now assume Schinzel's conjecture H, and fix some prime $p>k$. Then
there are infinitely many prime numbers $q\equiv 1\pmod{k!^k}$, such
that $\frac{q+i}{i+1}$ is prime for all $i\leq k$. For such a prime
number $q$ and $i\leq k$ we have
\[
\sigma_k(q+i) = \left(\left(\frac{q+i}{i+1}\right)^k+1\right)\sigma_k(i+1)
= q^k\sigma_{-k}(i+1) + O(1),
\]
hence, 
\[
\sum_{\nu=q}^{q+k-1}\frac{\sigma_k(\nu)}{(\nu)_{\nu-q+1}} =
\sum_{i=1}^{k}\sigma_{-k}(i)\frac{(q+i-1)^k}{(q+i-1)_i} + O(q^{-1}).
\]
The fraction $\frac{(q+i-1)^k}{(q+i-1)_i}$ can be written as
$P_{k, i}(q) + O(q^{-1})$ for some polynomial $P_{k, i}\in\Q[x]$,
combining our estimates we obtain that for 
all prime numbers $q\equiv 1\pmod{k!^k}$ with $\frac{q+i}{i+1}$ prime
for all $i\leq k$, we have
\begin{equation}
\label{eq:qprim}
\left\|\sum_{i=1}^{k}\sigma_{-k}(i)P_{k, i}(q)\}\right\| < q^{-1+\epsilon}.
\end{equation}
Now we repeat our argument, this time choosing an integer $q=pr$,
$q\equiv 1\pmod{k!^k}$, with $r$ prime, such that $\frac{q+i}{i+1}$ is
prime for all $i\leq k$. Arguing as above we deduce that 
\begin{equation}
\label{eq:qnichtprim}
\left\|\sigma_{-k}(p) P_{k, 1}(q) + \sum_{i=2}^{k}\sigma_{-k}(i)P_{k, i}(q)\}\right\| < q^{-1+\epsilon}.
\end{equation}
Since $q$ is fixed $\pmod{k!^k}$, the fractional part of
$\sigma_{-k}(i)P_{k, i}(q)$ does not depend on $q$, hence, comparing
(\ref{eq:qprim}) and (\ref{eq:qnichtprim}), we deduce that
\[
\left\|\sigma_{-k}(p) P_{k, 1}(q_1) - \sigma_{-k}(1) P_{k, 1}(q_2)\right\|
< q_1^{-1+\epsilon}
\]
holds true for all integers $q_1<q_2$, such that $q_1$ is $p$ times a
prime, $q_2$ is prime, $q_1\equiv q_2\equiv 1\pmod{k!^k}$, and
$\frac{q_j+i}{i}$ is prime for $j=1, 2$ and $i\leq k$. Using  the
fact that $P_{k, 1}(x)=x^{k-1}$ and $\sigma_{-k}(1)=1$, we obtain
\[
\left\|\frac{q_1^{k-1}}{p^k}\right\|<q_1^{-1+\epsilon}.
\]
For $q_1>p^2$, the left hand side cannot vanish, since then $p^2\nmid
q_1$. Hence, the left hand side is a nonzero rational number with
denominator dividing $p^k$, and therefore bounded below by
$p^{-k}$. However, $p$ is fixed, whereas $q_1$ may be chosen
arbitrary large, which yields a contradiction.

The proof of the second statement is similar, however, due to the fact
that we do not even know whether there is an infinitude of Sophie
Germain primes, it becomes more technical. As a substitute for
conjecture H we will use the following result. Denote by $P^-(n)$
the least prime factor of $n$.

\begin{Lem}
The number of primes $p\leq x$ such that
$P^-\left(\frac{p+1}{2}\right)$ and $P^-\left(\frac{p+2}{3}\right)$
are both greater then $x^{1/9}$ is $\gg\frac{x}{\log^3 x}$.
\end{Lem}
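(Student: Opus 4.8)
The plan is to realize this as a two-dimensional lower-bound sieve for a sequence indexed by primes. First I would restrict to primes $p\equiv 1\pmod{36}$; this costs only the factor $1/\phi(36)$ in the final count, and for such $p$ both $a_p:=\frac{p+1}{2}$ and $b_p:=\frac{p+2}{3}$ are integers coprime to $6$. They are also coprime to one another, since a common prime factor would divide $(p+2)-(p+1)=1$. Hence, writing $m_p:=a_pb_p=\frac{(p+1)(p+2)}{6}$ and $P(z):=\prod_{q<z}q$ and taking $z$ to be the least prime exceeding $x^{1/9}$, the conditions $P^-(a_p)>x^{1/9}$ and $P^-(b_p)>x^{1/9}$ hold simultaneously if and only if $\gcd(m_p,P(z))=1$ (using that $a_p,b_p\ge 2$, so that coprimality to $P(z)$ forces all of their prime factors to be $\ge z>x^{1/9}$). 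It therefore suffices to show
\[
S(\mathcal A,z):=\#\{\,p\le x:p\equiv 1\pmod{36},\ (m_p,P(z))=1\,\}\gg x/\log^3 x,
\]
where $\mathcal A:=\{\,m_p:p\le x\text{ prime},\ p\equiv 1\pmod{36}\,\}$.

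Next I would verify the hypotheses of a sieve of dimension $\kappa=2$. For a prime $q\nmid 6$ the condition $q\mid m_p$ is equivalent to $p\equiv -1$ or $-2\pmod q$, i.e.\ two residue classes coprime to $q$, so by the prime number theorem for arithmetic progressions the local density is $\omega(q)=\frac{2q}{q-1}$, which tends to $2$; consequently $\prod_{q<z}\bigl(1-\omega(q)/q\bigr)\asymp(\log z)^{-2}\asymp(\log x)^{-2}$, while $|\mathcal A|\sim\frac1{12}\,x/\log x$. For the level of distribution, note that for squarefree $d\mid P(z)$ the congruence $d\mid(p+1)(p+2)$ breaks into $2^{\omega(d)}$ residue classes modulo $d$ (one for each coprime factorization $d=d_1d_2$, forcing $p\equiv-1\pmod{d_1}$ and $p\equiv-2\pmod{d_2}$), so the remainder $r_d$ is a sum of at most $\tau(d)$ quantities of the shape $\pi(x;36d,a)-\pi(x)/\phi(36d)$. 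By the Bombieri--Vinogradov theorem, together with the standard Cauchy--Schwarz device for absorbing the divisor-function weight, one gets $\sum_{d\le D}|r_d|\ll_A x/\log^A x$ for every fixed $A$ provided $D=x^{1/2-\varepsilon}$.

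Finally I would apply the two-dimensional $\beta$-sieve (for instance in the form of Iwaniec, or of Diamond--Halberstam--Richert). With level $D=x^{1/2-\varepsilon}$ and sifting point $z=x^{1/9}$ the governing parameter is $u=\frac{\log D}{\log z}=\frac92-9\varepsilon$, which exceeds the sifting limit $\beta_2<\frac92$ for dimension two. Since the remainder sum is $\ll x/\log^A x$ with $A$ at our disposal, the lower-bound sieve yields
\[
S(\mathcal A,z)\ge\bigl(f_2(u)+o(1)\bigr)\,|\mathcal A|\prod_{q<z}\Bigl(1-\frac{\omega(q)}{q}\Bigr),
\]
with $f_2(u)>0$; combining this with $|\mathcal A|\asymp x/\log x$ and $\prod_{q<z}(1-\omega(q)/q)\asymp(\log x)^{-2}$ gives $S(\mathcal A,z)\gg x/\log^3 x$, which is the assertion.

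The one delicate point is the numerology: the argument works precisely because $\log D/\log z=9/2$ beats the dimension-two sifting limit, which leaves essentially no slack and forces the use of the full Bombieri--Vinogradov exponent $x^{1/2-\varepsilon}$ — this is exactly why the threshold $x^{1/9}$, rather than a larger power of $x$, appears in the statement. Everything else, including the divisor-weighted form of Bombieri--Vinogradov invoked in the second step, is routine, though the latter should be stated with some care.
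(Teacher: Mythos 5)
Your argument is correct and is in substance identical to the paper's proof: the paper simply cites Theorem 7.4 of Halberstam--Richert, which is precisely the dimension-two lower-bound sieve (Ankeny--Onishi, sifting limit $\nu(2)\approx 4.42$, or Diamond--Halberstam--Richert, $\beta_2\approx 4.27$, both below $9/2$) applied with Bombieri--Vinogradov level $x^{1/2-\varepsilon}$ to the values of $(p+1)(p+2)$ at prime arguments, after removing the fixed divisors $2$ and $3$ exactly as you do. The one point worth flagging is that the bare combinatorial $\beta$-sieve does not obviously satisfy $\beta_2<9/2$, so the appeal to the Ankeny--Onishi or Diamond--Halberstam--Richert form, as you indicate, is genuinely needed.
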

\begin{proof}
This follows from \cite[Theorem 7.4]{HR}.
\end{proof}
Note that the exponent $1/9$ is not optimal, however, it is sufficient
for our purpose. In the sequel, let $q$ be a prime number satisfying
$P^-\left(\frac{q+1}{2}\right)>q^{1/9}$ and
$P^-\left(\frac{q+2}{3}\right)>q^{1/9}$, and suppose that $q$ is
sufficiently large. As in the proof of the first
part of our theorem, we deduce that
\[
\left\|\frac{\sigma_3(q)}{q} + \frac{\sigma_3(q+1)}{q(q+1)} +
\frac{\sigma_3(q+2)}{q(q+1)(q+2)}\right\| < q^{-1+\epsilon}.
\]
By assumption we have $\sigma_3(q+2) = q^3 + \frac{q^3}{27} +
O(q^{8/3})$, that is, 
$\frac{\sigma_3(q+2)}{q(q+1)(q+2)} = \frac{28}{27}+O(q^{-1/3})$.
Moreover, denoting by $\{x\}$ the fractional part of the real number
$x$, we have $\left\{\frac{\sigma_3(q)}{q}\right\}=\frac{1}{q}$, and
we have
\[
\left\{\frac{\sigma_3(q+1)}{q(q+1)}-\frac{\sigma_3(q+1)}{(q+1)^2}\right\} =
1-\frac{1}{8} + O\left( \left\|\frac{(q+1)^2
}{q}\right\|\right) +
O(q^{-1/3}) = \frac{7}{8} + O(q^{-1/3}).
\]
Hence, setting $n=\frac{q+1}{2}$, we find that there are
$\gg\frac{x}{\log^3 x}$ integers $n\leq x$ with the following
properties:
\begin{enumerate}
\item[(i)] We have
\[
\left\|\frac{9\sigma_3(n)}{4n^2}+\frac{19}{216}\right\|\ll n^{-1/3},
\]
\item[(ii)] $P^-(n)>n^{-1/9}$,
\item[(iii)] $2n-1$ is prime, and $P^-\big(\frac{2n+1}{3}\big)>n^{-1/9}$.
\end{enumerate}
We will obtain a contradiction by estimating the number of integers
$n$ with these properties from above. If there were as
many integers $n$ with these properties, there has to be some $k\leq
9$, such that there are $\gg\frac{x}{\log^3 x}$ integers $n$ with
these properties which have precisely $k$ prime factors. We may assume
that $n$ is squarefree, for otherwise $n$ was divisible by the square of
an integer $k\geq n^{1/9}$, and the number of integers $n\in[x, 2x]$
with this property is bounded above by
\[
\sum_{k\geq x^{1/9}}\left[\frac{2x}{k^2}\right] \ll x^{8/9},
\]
which is of negligible size.
Let $p_1<p_2<\ldots<p_k$ be the prime factors
of $n$. Set $[k]=\{1, \ldots, k\}$. Then divisors of $n$ correspond to
subsets $I$ of $[k]$, and inserting the definition of $\sigma_3$, we see
that condition (i) is equivalent to 
\[
\left\|\sum_{I\subseteq[k]}\frac{9\prod_{i\in I} p_i}{4\prod_{i\not\in I}
p_i^2}+\frac{19}{216}\right\| \ll n^{-1/3}.
\]
The summand $I=[k]$ corresponds to the trivial divisor $n$, which
contributes $\frac{9n}{4}$. Since for $n$ sufficiently large, $n$ has to
be odd by condition (ii), the contribution is $\pm\frac{1}{4}\pmod{1}$.
Hence, all integers satisfying (i) and (ii) also satisfy
\begin{equation}
\label{eq:diophcrit}
\left\|\sum_{I\subseteq[k]}\frac{9\prod_{i\in I} p_i}{4\prod_{i\not\in I}
p_i^2}+\frac{19}{216}\pm\frac{1}{4}\right\| \ll n^{-1/3},
\end{equation}
If $k=1$, then
$n=p_1$, and 
(\ref{eq:diophcrit}) becomes $\|\frac{9}{4p_1^2}+\frac{19}{216}\pm\frac{1}{4}\| \ll
p_1^{1/3}$, which is impossible for $n$ sufficiently large. If $k=2$,
(\ref{eq:diophcrit}) is equivalent to 
\[
\left\|\frac{9p_2}{4p_1^2} + \frac{19}{216}\pm\frac{1}{4}\right\|\ll (p_1p_2)^{-1/9}
\]
since $p_2>p_1>n^{1/9}$. For fixed $p_1$, all admissible $p_2<x$ are
contained in $\ll\frac{x}{p_1^3}+1$ intervals of length
$\ll p_1^{2-2/9}$ each, hence, the number of admissible $p_2$ is $\ll
xp_1^{-1-2/9}$. Summing over all $p_1>x^{1/9}$, we find that the
number of integers $n\leq x$ with two prime factors satisfying
(\ref{eq:diophcrit}) is bounded above by $x^{1-2/81}$. Hence, we may
assume that $k\geq 3$, in particular, we have $p_1<x^{1/3}$. We divide
the interval $[x^{1/9}, x^{1/3}]$ into $\ll\log x$ intervals of the form
$[y, 2y]$ and will now estimate the number of integers $n\leq x$
satisfying conditions (i)--(iii) together with $p_1\in[y, 2y]$.  
Set
\[
\alpha = \sum_{1\not\in I\subset[k]}\frac{9\prod_{i\in I}
p_i}{4\prod_{i\not\in I} p_i^2}.
\]
Note that our assumption implies $p_1^2<\alpha<x$.We now distinguish
two cases, depending on the relative size of $\alpha$ and $y$. Let $C$
be a constant to be determined later, and assume first that for each
integer $2\leq\ell\leq 9$ we have
\begin{equation}
\label{eq:alphay}
\alpha\not\in [y^\ell\log^{-C} x, y^\ell\log^C x].
\end{equation}
Then we rewrite (\ref{eq:diophcrit}) as
\[
\|\alpha p_1 + \frac{\alpha}{p_1^2} + \frac{19}{216}\pm\frac{1}{4}\|\ll n^{-1/3}.
\]
It suffices to show that the number of integers $n_1\in[y, 2y]$ satisfying 
\begin{equation}
\label{eq:diophcrit2}
\|\alpha n_1 + \frac{\alpha}{n_1^2} + \frac{19}{216}\|\ll x^{-1/4}
\end{equation}
is bounded above by $\frac{y}{\log^6 y}$. This quantity is at most $yx^{-1/4}+ D$,
where $D=D(\alpha, y)$ is the discrepancy of the sequence $(\alpha n_1 +
\frac{\alpha}{n_1^2})_{n_1\in[y, 2y]}$. Bounding the discrepancy using the
Erd{\H o}s-Tur{\'a}n-inequality (see e.g. \cite[Corollary 1.1]{Mont})we
obtain
\[
D \ll \frac{y}{H} + \sum_{h\leq H}\frac{1}{h}\left|\sum_{n=y}^{2y} e(h
f(n))\right|.
\]
for any parameter $H\geq 1$, where have set $f(n)=\alpha
n+\frac{\alpha}{n^2}$. To bound the exponential sum on the right hand
side, it suffices to use the simplest van der Corput-type estimates
(see e.g. \cite[Theorem 2.9]{GK}). If the
integer $2\leq\ell\leq 8$ is determined by means of the inequality $y^\ell\log^C
x<\alpha<y^{\ell+1}\log^{-C} x$, we have
\[
\frac{\log^C}{y}\ll f^{(\ell+1)}(x)\ll\frac{1}{\log^C x},\quad \forall
 x\in[y, 2y].
\]
For $\ell\geq 3$ we deduce
\begin{eqnarray*}
\sum_{n=y}^{2y} e(h f(n)) & \ll & y\big(h\alpha
f^{(\ell+1)}(y)\big)^{1/(4Q-2)} + h^{-1} f^{(\ell+1)}(y)^{-1}\\
 & \ll & hy \log^{-C/Q} x + y\log^{-C} x,
\end{eqnarray*}
where $Q=2^{\ell+1}$, and therefore
\begin{eqnarray*}
D & \ll & \frac{y}{H} + \sum_{h\leq H}\frac{1}{h}\left|\sum_{n=y}^{2y} e(h
f(n))\right|\\
 & \ll & \frac{y}{H} + Hy \log^{-C/Q} x.
\end{eqnarray*}
Setting $H=\log^7 x$ and $C=14Q\leq 2^{13}$, we obtain
$D\ll\frac{y}{\log^7 x}$, and therefore, for $x$ sufficiently large,
$D\leq\frac{y}{\log^6 y}$. Note that, apart from (\ref{eq:alphay}),
this estimate is independent of $\alpha$, which shows that there are
$\ll\frac{x}{\log^5 x}$ integers $n\leq x$ satisfying conditions
(i)--(iii) together with (\ref{eq:alphay}).

Now we consider the case
\begin{equation}
\label{eq:alphay2}
\alpha\in [y^\ell\log^{-C} x, y^\ell\log^C x]
\end{equation}
for some integer $2\leq\ell\leq 9$. Fix prime numbers
$x^{1/9}<p_2<\dots< p_k$, and a real number $y$ such that $yp_2\cdots
p_k<x$, such that (\ref{eq:alphay2}) is satisfied. The prime numbers
$p_2, \ldots, p_k$ can be chosen in $\ll\frac{x}{y\log x}$ ways, and
there are $\ll\log\log x$ intervals of the form $[y, 2y]$ to be
considered. For each fixed $p_2, \ldots, p_k$, the number of primes
$p_1\in[y, 2y]$ such that $p_1\cdots p_k$ satisfies condition (iii) is
$\ll\frac{y}{\log^3 x}$, thus, the total number of integers $n$
satisfying conditions (ii) and (iii) as well as
\[
p_1^\ell\log^{-C} x \leq \alpha\leq 2p_1^\ell\log^C x
\]
for some integer $\ell$ is $\ll\frac{x\log\log x}{\log^4 x}$. Hence, the total number of integers
$n\leq x$ satisfying conditions (i)--(iii) is bounded above by
$\mathcal{O}\Big(\frac{x\log\log x}{\log^4 x}\Big)$, which contradicts our lower bound
$\frac{x}{\log^3 x}$, proving our theorem.
\end{proof}

J.-C. Schlage-Puchta\\
Mathematisches Institut\\
Eckerstr. 1\\
79111 Freiburg\\
Germany
\end{document}